\documentclass[12pt]{amsart}

\usepackage{amsmath, amssymb}

\usepackage[dvipdfmx]{graphics}
\usepackage{here}
\usepackage{tikz}
\usetikzlibrary{arrows.meta}

\usepackage{amsmath, amssymb}
\usepackage{pifont}
\usepackage{booktabs}

\numberwithin{equation}{section}
\newtheorem{theorem}{Theorem}[section]  
\newtheorem{theorem?}{``Theorem''}[section]

\newtheorem{lemma}[theorem]{Lemma}

\theoremstyle{definition}

\theoremstyle{remark}
\newtheorem{remark}[theorem]{Remark}

\newcommand{\R}{{\mathbb R}}
\newcommand{\C}{{\mathbb C}}
\newcommand{\PP}{{\mathbb P}}

\newcommand{\N}{{\mathbb N}}

\makeindex             

\begin{document}

\title[Unbounded Reinhardt domains]{
Unbounded Reinhardt domains with finite-dimensional Bergman spaces in $\C^n$}
\author{Chika Hayashida and Joe Kamimoto}

\address{Kyushu university, 
Motooka 744, Nishi-ku, 
Fukuoka, 819-0395, Japan.}  
\email{hayashida.chika.311@s.kyushu-u.ac.jp}

\address{Kyushu university, 
Motooka 744, Nishi-ku, 
Fukuoka, 819-0395, Japan.}  
\email{joe@math.kyushu-u.ac.jp}
%
%
\keywords{
Bergman space, 
Bergman kernel, 
Bergman metric, 
holomorphic sectional curvature, 
holomorphic automorphism group}
\subjclass[2020]{32A36 (32A40, 53C55, 32M05).}
\maketitle

\begin{abstract}
In this paper, we construct unbounded domains 
in $\C^n$ ($n\geq 2$), 
whose Bergman spaces are nontrivial and 
finite-dimensional. 
We further show that 
the Bergman metrics on these domains 
have positive constant sectional curvature 
equal to $2$, and 
that  their holomorphic automorphism groups 
consist only of linear mappings. 
\end{abstract}

\section{Introduction}

In the study of several complex variables, 
the distinction between whether a domain is 
bounded or unbounded has, 
at times, 
not only caused serious difficulties but also 
raised interesting issues
(see 
\cite{Wie84,Zwo99, Zwo00,CKO04,Eng07,Juc12,KNY14,AGK16,
PfZ17, Yam17, GHH17, HST17, HST20, HST21, HLT25}, etc.).
In this note, we construct specific unbounded domains 
in 
$\C^n$ ($n\geq 2$), 
which appear to be of interest and significant 
from the viewpoints of the following three aspects:
the dimensions of the Bergman spaces, 
the holomorphic sectional curvatures of the Bergman metrics, 
and the holomorphic automorphism groups. 
These three aspects are, in fact,  
closely interconnected. 

Our example of such domains is defined as follows. 
Let $a>0$ and 
consider the unbounded complete Reinhardt domain 
$D(a) \subset \mathbb{C}^n$ ($n \geq 2$) given by
\begin{equation}\label{eqn:1.1}
D(a)=
\left\{
z\in\C^n: 
\left||z_k|^2-|z_n|^2\right|<\frac{1}{(|z_n|^2+1)^{a}}
\text{ for $k=1,\ldots,n-1$}
\right\}.
\end{equation}
The construction of the above $D(a)$ 
is inspired by the following two known 
examples in $\C^2$:
\begin{equation}\label{eqn:1.2}
\begin{split}
&D_{\rm Wie}(k):=
\left\{z\in\C^2:  
\big||z_1|-|z_2|\big|\cdot (|z_1|+|z_2|)^{4k}<1
\right\};\\
&D_{\rm HL}(\alpha):=
\left\{z\in\C^2: 
\left||z_1|^2-|z_2|^2\right|<\frac{1}{(1+|z_1|^2+|z_2|^2)^{\alpha}}
\right\},
\end{split}
\end{equation}
where $k\in\N$ 
and $\alpha\in(2,3)$.
The above two examples 
are 
given in the papers by Wiegerinck \cite{Wie84} and
by Huang, Li and Treuer \cite{HLT25}, respectively. 
(The domain $D_{\rm Wie}(k)$ is modified from our perspective.) 
It is worth noting that the previously known examples 
are two-dimensional, 
whereas our example is constructed in arbitrary dimension $n$. 
While these three domains are similar in shape, 
$D(a)$ is slightly different from $D_{\rm Wie}(k)$ 
and $D_{\rm HL}(\alpha)$, 
even in the two-dimensional case.
We remark that 
the boundary of $D(a)$ is not smooth 
when $n\geq 3$, but it is easy to 
construct domains with smooth boundaries
which possess the same properties 
as those of $D(a)$ obtained  
in this note (see Remark~2.2 (i) below).

This note is organized as follows. 
In Section~2, 
the Bergman space of the domain 
$D(a)$ is shown to be finite-dimensional.
In Section~3, we show that 
the Bergman metric of $D(a)$ 
for certain values of $a$
has constant 
holomorphic sectional curvature 
equal to $2$. 
In Section~4, the structure of  
the holomorphic 
automorphism group of $D(a)$ 
for certain values of $a$ is investigated.

\smallskip

{\it Notation and symbols.}\quad
\begin{itemize}
\item
We denote by $\N$, $\R$, $\C$ the set consisting of 
all natural numbers, real numbers, 
complex numbers, respectively. 
Moreover, we denote by $\N_0$
the set consisting of all nonnegative integers.
\item
For $p:=(p_1,\ldots,p_n)\in\N_0^n$, 
$z=(z_1,\ldots,z_n)\in\C^n$, 
the multi-indeces will be used as 
$
|p|:=p_1+\cdots+p_n$ and
$z^{p}=z_1^{p_1}\cdots z_n^{p_n}.
$
\end{itemize}

\section{Dimensions of the Bergman spaces}

Let $\Omega$ be a domain in $\C^n$ and 
let $A^2(\Omega)$ be the Hilbert space 
of the $L^2$-holomorphic functions on $\Omega$, i.e., 
$$
A^2(\Omega)=
\left\{
f:\Omega\to\C \mbox{ holomorphic}:
\|f\|_{\Omega}:=
\left(
\int_{\Omega} |f(z)|^2 dV(z)
\right)^{1/2}<\infty
\right\},
$$
where $dV$ is the Lebsgue measure on $\C^n$.

When $\Omega$ is a bounded domain, 
$A^2(\Omega)$ contains all the monomials, 
which implies that 
the dimension of $A^2(\Omega)$ is infinity. 
On the other hand,  
the issue of the dimension of 
$A^2(\Omega)$ is not obvious in the unbounded case. 
When $\Omega$ is a domain in the complex plane $\C$, 
it is seen in \cite{Wie84} that 
\begin{equation}\label{eqn:2.1}
\text{$A^2(\Omega)$
is either trivial, i.e., $\{0\}$,  or infinite dimensional.}
\end{equation}
It might be natural to expect that 
property (\ref{eqn:2.1}) always holds
in the general dimensional case. 
However, J. Wiegerinck \cite{Wie84} 
gave, surprisingly, a counterexample to 
(\ref{eqn:2.1}); more precisely,  he showed that
$D_{\rm Wie}(k)$ with $k\in\N$ 
has a finite-dimensional 
and nontrivial Bergman space.
More recently, 
Huang, Li and Treuer \cite{HLT25} also showed that 
the domain $D_{\rm HL}(\alpha)$ has the same properties.

We now show that the Bergman space of 
$D(a)$ is also finite-dimensional.

\begin{theorem}
If $a>\frac{1}{n-1}$, then 
the Bergman space of $D(a)$ is given by
\begin{equation}\label{eqn:2.2}
A^2(D(a))={\rm Span}\{z^p:
p\in\N_0^n, 
|p|<(n-1)a-1\}.
\end{equation}
From (\ref{eqn:2.2}), the dimension of $A^2(D(a))$ 
can be expressed as 
\begin{equation}
\dim A^2(D(a))=
\begin{cases} 
&0 \quad  \mbox{ if  $ 0<a\leq \frac{1}{n-1}$,} \\
&\binom{n+k}{k} \quad
\mbox{if  $\frac{k+1}{n-1}<a\leq 
\frac{k+2}{n-1}$ 
 $(k\in\N_0)$}.
\end{cases}
\end{equation}
\end{theorem}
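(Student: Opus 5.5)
Since $D(a)$ is a complete Reinhardt domain containing the origin, every $f\in A^2(D(a))$ has a Laurent (in fact Taylor) expansion $f(z)=\sum_{p\in\N_0^n}c_pz^p$ converging locally uniformly on $D(a)$. The monomials are mutually orthogonal in $L^2(D(a))$, because the domain is invariant under the torus action $z\mapsto(e^{i\theta_1}z_1,\ldots,e^{i\theta_n}z_n)$. Hence $\|f\|^2_{D(a)}=\sum_p|c_p|^2\|z^p\|^2_{D(a)}$, where the equality is justified by exhausting $D(a)$ by the torus-invariant sets $D(a)\cap\{|z|<R\}$, integrating the angular variables first there, and applying monotone convergence. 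So $A^2(D(a))$ is the closed span of those monomials with $\|z^p\|_{D(a)}<\infty$. The whole statement therefore reduces to the claim that $\|z^p\|_{D(a)}<\infty$ if and only if $|p|<(n-1)a-1$. The dimension formula then follows by counting: the number of $p\in\N_0^n$ with $|p|\le k$ is $\binom{n+k}{k}$, and $|p|<(n-1)a-1$ means $|p|\le k$ exactly when $k+1<(n-1)a-1\cdot 0$ shifted appropriately, i.e. when $\frac{k+1}{n-1}<a\le\frac{k+2}{n-1}$. No $p$ qualifies when $(n-1)a\le1$.

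To compute $\|z^p\|^2$, pass to polar coordinates and then to $t_k=|z_k|^2$. This gives, up to the factor $\pi^n$,
\[
\|z^p\|^2=\pi^n\int_{E} t_1^{p_1}\cdots t_n^{p_n}\,dt_1\cdots dt_n,
\]
where
\[
E=\{t\in[0,\infty)^n:\ |t_k-t_n|<(t_n+1)^{-a},\ k=1,\ldots,n-1\}.
\]

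For fixed $t_n=s$, each $t_k$ ranges over an interval of length at most $2(s+1)^{-a}$. This interval lies in $[s-(s+1)^{-a},\,s+(s+1)^{-a}]\cap[0,\infty)$. For large $s$ it has length exactly $2(s+1)^{-a}$, and $t_k\asymp s$ on it. The inner integral is therefore comparable to
\[
\prod_{k<n}\bigl(s^{p_k}(s+1)^{-a}\bigr)=s^{|p|-p_n}(s+1)^{-(n-1)a}.
\]
Multiplying by $s^{p_n}$, we get $\|z^p\|^2\asymp C+\int_1^\infty s^{|p|-(n-1)a}\,ds$. This integral is finite if and only if $|p|-(n-1)a<-1$, i.e. $|p|<(n-1)a-1$. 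The region $s\le1$ is bounded and contributes a finite positive amount.

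The main care point is making the two-sided comparison rigorous. For $s\ge s_0$ we have $(s+1)^{-a}\le s/2$, so $s/2\le t_k\le 2s$ on the fiber. This gives explicit constants in both bounds: the upper bound yields finiteness, and the lower bound yields divergence when $|p|\ge(n-1)a-1$.

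A further point is the hypothesis $a>\frac1{n-1}$. Under it the span in (\ref{eqn:2.2}) is the stated finite set. When $a\le\frac1{n-1}$, even $p=0$ fails, so $A^2(D(a))=\{0\}$, which is the first case of the dimension formula.
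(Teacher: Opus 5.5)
Your proof is correct and follows the paper's route. Both pass to polar coordinates with $t_k=|z_k|^2$ and show the fiber integrals are comparable to $s^{p_k}(s+1)^{-a}$; you do this with explicit two-sided bounds where the paper expands $f_q(\rho)=2\rho^{q-a}(1+O(1/\rho))$. Your Taylor-expansion and torus-by-torus Parseval argument also supplies the reverse inclusion, which the paper only attributes to Wiegerinck.

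One small correction: $D(a)$ is not actually a complete Reinhardt domain. For example, $(R,\ldots,R)\in D(a)$ but $(0,\ldots,0,R)\notin D(a)$ for large $R$. The power-series expansion you need still holds, because every holomorphic function on a Reinhardt domain containing the origin is given by a power series converging locally uniformly on that domain.
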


\begin{proof}
Using the polar coordinates 
$z_j=r_j e^{i\theta_j}$ for $j=1,\ldots,n$
in the computation of the integral:
\begin{equation*}
\begin{split}
\|z^p\|^2=&\int_{D(a)} 
|z_1|^{2p_1} \cdots |z_n|^{2p_n}dV(z), 
\end{split}
\end{equation*}
we have 
\begin{equation*}
\begin{split}
\|z^p\|^2=
(2\pi)^n \int_0^{\infty} 
\left(
\prod_{k=1}^{n-1} 
\int_{|r_k^2-r_n^2|<\frac{1}{(r_n^2+1)^{\alpha}},r_k>0} 
r_k^{2p_k+1}dr_k
\right)
r_n^{2p_n+1} dr_n. 
\end{split}
\end{equation*}
Furthermore, 
changing integral variables, 
we have  
\begin{equation}\label{eqn:2.4}
\begin{split}
\|z^p\|^2=\pi^n \int_0^{\infty} 
\prod_{k=1}^{n-1} f_{p_k}(\rho)
\rho^{p_n} d\rho, 
\end{split}
\end{equation}
with
\begin{equation*}
f_q(\rho) = 
\int_{I(\rho)} 
r^{q}dr, 
\end{equation*}
where $q\in\N_0$ and 
$
I(\rho)=\{r>0: |r-\rho|<(\rho+1)^{-a}\}.
$
The convergence of the integral in (\ref{eqn:2.4})
is determined by the behavior of 
$f_q(\rho)$ as $\rho\to\infty$.
Its asymptotic behavior can be described as follows.
\begin{equation*} 
\begin{split}
f_q(\rho)&=
\frac{1}{q+1}
\left[
\left(
\rho+\frac{1}{(\rho+1)^{a}}
\right)^{q+1}
-
\left(
\rho-\frac{1}{(\rho+1)^{a}}
\right)^{q+1}
\right] \\
&=\frac{\rho^{q+1}}{q+1}
\left[
\left(1+\frac{1}{\rho(\rho+1)^{a}}\right)^{q+1}- 
\left(1-\frac{1}{\rho(\rho+1)^{a}}\right)^{q+1}
\right] \\
&=\frac{\rho^{q+1}}{q+1}
\left[
\left\{
1+\frac{q+1}{\rho^{a+1}}
+O\left(
\frac{1}{\rho^{a+2}}
\right)
\right\}- 
\left\{
1-\frac{q+1}{\rho^{a+1}}
+O\left(
\frac{1}{\rho^{a+2}}
\right)
\right\}
\right] \\
&=2\rho^{q-a}
\left(1+O\left(
\frac{1}{\rho}
\right)
\right) \quad \mbox{as $\rho\to\infty$.}
\end{split}
\end{equation*}
Applying the above result to (\ref{eqn:2.4}), 
we can see the following equivalences. 
\begin{equation}\label{eqn:2.5}
\begin{split}
\|z^{p}\|<\infty 
\Longleftrightarrow & \;\;
\int_1^{\infty} \rho^{|p|-a(n-1)}
d\rho<\infty  \\
\Longleftrightarrow & \;\;
|p|-a(n-1)<-1,  
\end{split}
\end{equation}
which implies 
$A^2(D(a))\supset 
{\rm Span}\{z^p:
p\in\N_0^n, 
|p|<(n-1)a-1\}$.
%
Conversely, the opposite inclusion can be 
established in a similar manner 
as in the proof of the main theorem in \cite{Wie84}.

We remark that 
since $|p|$ is nonnegative, 
the value of $a$ must be greater than 
$\frac{1}{n-1}$ from (\ref{eqn:2.5}).

\end{proof}

\begin{remark}
(i) \;\;
The boundary of $D(a)$ is not smooth when $n\geq 3$. 
However,  we can construct a domain 
with $C^{\infty}$-smooth boundary satisfies 
the properties in Theorem 2.1 as follows. 
For $a>0$, we define
\begin{equation}
\tilde{D}(a)=
\left\{
z\in\C^n: 
\left||z_k|^2-|z_n|^2\right|<\frac{2}{(|z_n|^2+1)^{a}}
\mbox{ for $k=1,\ldots,n-1$}
\right\}.
\end{equation}
Since $D(a)\subsetneq \tilde{D}(a)$
and the boundaries of $D(a)$ and $\tilde{D}(a)$ 
do not intersect, 
it is easy to see the existence of 
a Reinhardt domain $\Omega(a)$
with smooth boundary such that 
$D(a)\subsetneq \Omega(a) \subsetneq \tilde{D}(a)$.
It is easy to check that $\Omega(a)$ satisfies 
the same properties as in Theorem~2.1.

(ii)\;\;
The pseudoconvexity of a domain 
has a significant influence on 
the dimension of its Bergman space. 
Wiegerinck \cite{Wie84} conjectured that 
property (\ref{eqn:2.1}) always holds 
for every pseudoconvex domain $\Omega$.
As far as we know, this conjecture remains 
unsolved at present, 
but it has already been affirmatively 
settled in many cases
(\cite{Zwo99, Zwo00, Eng07, Juc12, 
GHH17, PfZ17}).
In particular, 
it has been confirmed 
in the case of Reinhardt domains
(\cite{Zwo99, Zwo00, Eng07}). 
From these results,  the domain
$D(a)$ with $a\geq\frac{1}{n-1}$
cannot be pseudoconvex. 
\end{remark}

\section{Holomorphic sectional curvatures 
of the Bergman metrics}

Let $\Omega$ be a domain in $\C^n$. 
The {\it Bergman kernel} $K_{\Omega}(z,w)$ of $\Omega$ 
is defined by 
$$
K_{\Omega}(z,w)=\sum_{\alpha} 
\phi_{\alpha}(z) \overline{\phi_{\alpha}(w)},
$$ 
where $\{\phi_{\alpha}\}_{\alpha}$ is a complete orthonormal 
basis of $A^2(\Omega)$.  
Its restriction to the diagonal, $K_{\Omega}(z,z)$,  
is denoted simply by $K_{\Omega}(z)$. 

It has recently been recognized 
that the issue of the dimension of $A^2(\Omega)$ is
closely related to that of 
constant holomorphic sectional curvatures
of the Bergman metric. 
Recently, 
Huang, Li and Treuer \cite{HLT25} has obtained 
several significant results on this topic.
In their paper, they show that the Bergman metric 
of the domain $D_{\rm HL}(\alpha)$ 
in (\ref{eqn:1.2}) has constant holomorphic 
sectional curvature equal to $2$. 
In this note, we present 
a general-dimensional example $D(a)$  in (\ref{eqn:1.1}) 
as follows. 

\begin{theorem}
If 
$\frac{2}{n-1}<a\leq\frac{3}{n-1}$,
then the following statements hold: 
\begin{enumerate}
\item
The Bergman space is 
$A^2(D(a))={\rm Span}\{1,z_1,\ldots,z_n\}.$
\item
The Bergman metric of $D(a)$ has constant 
holomorphic sectional curvature equal to $2$.
\end{enumerate}
\end{theorem}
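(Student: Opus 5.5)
Part (1) follows from Theorem~2.1 with $k=1$. When $\frac{2}{n-1}<a\leq\frac{3}{n-1}$ we have $1<(n-1)a-1\leq 2$, so the exponents allowed by (\ref{eqn:2.2}) are exactly those with $|p|\leq 1$. Hence $A^2(D(a))={\rm Span}\{1,z_1,\ldots,z_n\}$, and its dimension is $n+1=\binom{n+1}{1}$.

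For part (2), I first compute the Bergman kernel from an explicit orthonormal basis. Since $D(a)$ is Reinhardt, distinct monomials are orthogonal in $L^2(D(a))$: the integrand $z^p\overline{z^q}$ carries the factor $e^{i(p-q)\cdot\theta}$, which integrates to zero unless $p=q$. So $\{1/c_0,\,z_1/c_1,\ldots,z_n/c_n\}$ is an orthonormal basis, where $c_0^2={\rm vol}(D(a))$ and $c_j^2=\|z_j\|^2$. Both quantities are finite by (\ref{eqn:2.5}). The kernel is therefore
\begin{equation*}
K(z,w)=\frac{1}{c_0^2}+\sum_{j=1}^n \frac{z_j\overline{w_j}}{c_j^2}.
\end{equation*}
I then apply the linear change of variables $\zeta_0=1/c_0$ (a constant) and $\zeta_j=(c_0/c_j)z_j$. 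This gives $K(z)=c_0^{-2}(1+|\zeta|^2)$, with $\zeta=(\zeta_1,\ldots,\zeta_n)\in\C^n$.

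The metric is $g_{i\bar j}=\partial_i\bar\partial_j\log K$. In the $\zeta$-coordinates the constant factor $c_0^{-2}$ drops out, so $g=\partial\bar\partial\log(1+|\zeta|^2)$. This is exactly the Fubini--Study metric on the affine chart $\C^n\subset\PP^n$, pulled back by the injective linear map $z\mapsto\zeta$. Note that $K>0$ everywhere, so $\log K$ is globally defined and the metric is positive definite on all of $D(a)$. Holomorphic sectional curvature is intrinsic to the Kähler metric, so it equals that of $\partial\bar\partial\log(1+|\zeta|^2)$, which is the constant $2$ under the normalization
\begin{equation*}
R(v)=\frac{R_{i\bar j k\bar l}v^i\bar v^j v^k\bar v^l}{(g_{i\bar j}v^i\bar v^j)^2}
\end{equation*}
with the usual sign convention.

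I would verify this constant directly rather than cite it. By $U(n)$-invariance of $\log(1+|\zeta|^2)$ it suffices to work at $\zeta=0$, where the function expands as
\begin{equation*}
\log(1+|\zeta|^2)=|\zeta|^2-\tfrac12|\zeta|^4+\cdots.
\end{equation*}
At the origin this gives $g_{i\bar j}=\delta_{ij}$ and first derivatives of $g$ vanishing, and the quartic term yields $R_{i\bar j k\bar l}=-\partial_k\bar\partial_l g_{i\bar j}=\delta_{ij}\delta_{kl}+\delta_{il}\delta_{kj}$. Hence $R(v)=2|v|^4/|v|^4=2$.

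The main obstacle is keeping the sign and normalization conventions consistent with \cite{HLT25}, so that the constant comes out as $2$ rather than $-2$ or $1$. Everything else reduces to linear algebra once the kernel is written in the form $c_0^{-2}(1+|\zeta|^2)$.
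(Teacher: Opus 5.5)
Your proposal is correct and follows the same route as the paper: part (1) comes from Theorem~2.1, and part (2) writes the kernel from the orthonormal monomial basis, then rescales the coordinates linearly so that $\log K$ becomes, up to an additive constant, the Fubini--Study potential $\log(1+|\zeta|^2)$. One small fix concerns your optional direct curvature check: $U(n)$ fixes the origin, so its invariance alone does not let you reduce to $\zeta=0$; use instead that $U(n+1)$ acts transitively by holomorphic isometries of $\omega_{FS}$ on $\PP^n$, or redo the computation at a general point.
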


\begin{proof}

(i) \;\; It follows from Theorem~2.1 that 
the condition 
$1<(n-1)a-1\leq 2$ is equivalent to (i). 

(ii) \;\;
From (i), 
the Bergman kernel of $D(a)$ can be written
as 
\begin{equation}\label{eqn:3.1}
K_{D(a)}(z)=c_0+\sum_{k=1}^n c_k |z_k|^2
\quad (z\in D(a)), 
\end{equation}
where 
$$
c_0={\rm vol} (D(a))^{-1}, 
\quad 
c_k=\|z_k\|^{-2}=
\left(\int_{D(a)} |z_k|^2 dV(z)\right)^{-1}.$$ 
Furthermore, we have
$$
K_{D(a)}(z)=
c_0\left(1+\sum_{k=1}^n \tilde{c}_k |z_k|^2\right)
\quad (z\in D(a)),
$$ 
where $\tilde{c}_k=c_k/c_0$ for
$k=1,\ldots,n$. 
Through
the scaling $(w_1,\ldots,w_n)=
(\sqrt{\tilde{c}_1}z_1,\ldots,
\sqrt{\tilde{c}_n}z_n)$,
the Bergman metric of $D(a)$ 
is isometric to the Fubini-Study metric
of $\PP^n$ restricted to 
a domain in the cell $\C^n$ with 
$$
\omega_{FS} =
\partial\bar{\partial} \log 
\left(
1+\sum_{k=1}^n |w_k|^2
\right). 
$$
Therefore, the Bergman metric of $D(a)$ 
has a constant holomorphic 
sectional curvature equal 
to $2$.
\end{proof}


\section{Holomorphic automorphism groups}

By applying the results of A. Yamamori in 
\cite{Yam14, Yam15, Yam17}
(see also \cite{KNY14}), 
we can describe the structure of the holomorphic 
automorphism group of $D(a)$ for
$a\in(\frac{2}{n-1},\frac{3}{n-1}]$. 
The results stated in this section follow directly
from a combination of Yamamori's works.
In fact, 
the argument presented below is based on an idea of 
Yamamori, 
communicated during a lecture at a symposium 
in Fukuoka (\cite{Yam25lecture}), 
which has appeared in the preprint \cite{Yam25}. 
We include it here for the reader's convenience.

Let $\Omega$ be a domain in $\C^n$ and
let ${\rm Aut}(\Omega)$ 
be the holomorphic automorphism group 
on $\Omega$. 
The $n \times n$ matrix $T_{\Omega}(z,w)$ 
is defined by
\begin{equation*}
T_{\Omega}(z,w)=
\begin{pmatrix} \displaystyle
\frac{\partial^2}{\partial z_1 \partial \overline{w_1}} 
\log K_{\Omega}(z,w) & \cdots &  
\displaystyle
\frac{\partial^2}{\partial z_n \partial \overline{w_1}} 
\log K_{\Omega}(z,w)\\
\vdots & \ddots & \vdots \\
\displaystyle
\frac{\partial^2}{\partial z_1 \partial \overline{w_n}} 
\log K_{\Omega}(z,w) & \cdots &  
\displaystyle
\frac{\partial^2}{\partial z_n \partial \overline{w_n}} 
\log K_{\Omega}(z,w)
\end{pmatrix},
\end{equation*}
for $(z,w)\in\Omega\times\Omega$ such that 
$K_{\Omega}(z,w)\neq 0$.
The function $B_{\Omega}(z)$ is defined by 
\begin{equation}\label{eqn:4.1}
B_{\Omega}(z):=
\frac{\det T_{\Omega}(z,z)}{K_{\Omega}(z)},
\end{equation}
for $z\in\Omega$ such that $K_{\Omega}(z)>0$. 
Let $\Omega_1$, $\Omega_2$ be domains
in $\C^n$ and let $F:\Omega_1\to\Omega_2$
be a biholomorphism.
From the transformation laws:
\begin{equation*}
\begin{split}
&K_{\Omega_1}(z,w)=
\overline{\det J_F(w)}\cdot
K_{\Omega_2}(F(z),F(w))\cdot
\det J_F(z), \\
&T_{\Omega_1}(z,w)=
\overline{J_F(w)^{\intercal}}\cdot
T_{\Omega_2}(F(z),F(w))\cdot
J_F(z), 
\end{split}
\end{equation*}
where $J_F(z)$ is the Jacobian matrix of $F$ at $z$, 
it follows that the function in (\ref{eqn:4.1}) is invariant 
under the biholomorphism in the sense that 
\begin{equation}\label{eqn:4.2}
B_{\Omega_1}(z)=B_{\Omega_2}(F(z)).
\end{equation}
Of course, the above transformation laws 
hold on the set where the corresponding quantities 
are well defined.

The following theorem 
is a strong generalization of 
the famous theorem of H. Cartan. 

\begin{theorem}[\cite{IsK10, Yam14,  KNY14}]
Let $\Omega_1$ 
and $\Omega_2$ be domains in $\C^n$containing the origin, 
and 
suppose that each satisfies the following two conditions:
\begin{enumerate}
\item 
$K_{\Omega}(z,0)\equiv 
K_{\Omega}(0,0)>0$ for $z\in\Omega$;
\item
$T_{\Omega}(z,0)\equiv T_{\Omega}(0,0)$ is positive definite
for $z\in\Omega$.
\end{enumerate}
Then every biholomorphism $F:\Omega_1\to\Omega_2$ 
satisfying $F(0)=0$ is linear.
\end{theorem}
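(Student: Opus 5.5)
The plan is the classical Cartan argument, carried out with Bergman-type representative coordinates. For $i=1,2$, define the holomorphic map $\sigma_i:\Omega_i\to\C^n$ by
$$
\sigma_i(z)=T_{\Omega_i}(0,0)^{-1}\,\overline{\operatorname{grad}_{\bar w}}\log K_{\Omega_i}(z,w)\Big|_{w=0}^{\,z\text{-part}} ,
$$
i.e. componentwise $\sigma_i(z)_j=\sum_k \bigl(T_{\Omega_i}(0,0)^{-1}\bigr)_{jk}\,\bigl(\partial_{\bar w_k}\log K_{\Omega_i}(z,w)|_{w=0}-\partial_{\bar w_k}\log K_{\Omega_i}(0,w)|_{w=0}\bigr)$. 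The idea is that, under hypotheses (1) and (2), each $\sigma_i$ is the identity map, and the transformation law forces $F$ to coincide with a linear map in these coordinates.

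The first step is to show $\sigma_i(z)=z$. By (1), $K_{\Omega_i}(z,0)$ is a nonzero constant, so $\log K_{\Omega_i}(z,w)$ is well defined near $\Omega_i\times\{0\}$. The function $g(z)=\partial_{\bar w_k}\log K_{\Omega_i}(z,w)|_{w=0}$ is holomorphic in $z$. Its $z_j$-derivative is the $(k,j)$ entry of $T_{\Omega_i}(z,0)$, which is the constant $T_{\Omega_i}(0,0)$ by (2). Hence $g(z)-g(0)=\sum_j T_{\Omega_i}(0,0)_{kj}z_j$ on the connected set $\Omega_i$. Since $T_{\Omega_i}(0,0)$ is invertible, this gives $\sigma_i(z)=z$.

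The second step uses the transformation laws with $w=0$ and $F(0)=0$. Differentiating $\log K_{\Omega_1}(z,w)=\log\overline{\det J_F(w)}+\log K_{\Omega_2}(F(z),F(w))+\log\det J_F(z)$ in $\bar w$ at $w=0$ produces three terms. The term $\log\det J_F(z)$ is independent of $w$, and the term in $\overline{\det J_F(w)}$ is independent of $z$. Therefore, after subtracting the value at $z=0$, these drop out. The chain rule then gives
$$
\partial_{\bar w}\log K_{\Omega_1}(z,w)|_{0}-(\cdot)|_{z=0}=\overline{J_F(0)}^{\intercal}\bigl(\partial_{\bar u}\log K_{\Omega_2}(F(z),u)|_{0}-(\cdot)|_{z=0}\bigr),
$$
where we use that $F(w)$ is antiholomorphic-in-$\bar w$ only through $\overline{F(w)}$. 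By the first step, the left side equals $T_{\Omega_1}(0,0)z$ and the right side equals $\overline{J_F(0)}^{\intercal}T_{\Omega_2}(0,0)F(z)$. Hence
$$
F(z)=T_{\Omega_2}(0,0)^{-1}\bigl(\overline{J_F(0)}^{\intercal}\bigr)^{-1}T_{\Omega_1}(0,0)\,z,
$$
which is linear. Here $J_F(0)$ is invertible because $F$ is a biholomorphism.

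The main obstacle is justifying the use of $\log K(z,w)$ off the diagonal. By (1), $K_{\Omega_i}(z,0)\ne0$ on all of $\Omega_i$, so by continuity $K$ is nonvanishing on a neighborhood of $\Omega_i\times\{0\}$. This suffices because only first $\bar w$-derivatives at $w=0$ appear. The transformation law for $K$ guarantees $K_{\Omega_2}(F(z),F(w))\ne0$ on the corresponding neighborhood, which keeps all the logarithmic derivatives well defined.
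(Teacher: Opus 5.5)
The paper gives no proof of this theorem; it quotes it from Ishi--Kai, Yamamori and Kim--Ninh--Yamamori. Your argument is the Bergman representative-coordinate proof used in those works: conditions (i) and (ii) make $z\mapsto T_{\Omega}(0,0)^{-1}\bigl(\nabla_{\bar w}\log K_{\Omega}(z,w)-\nabla_{\bar w}\log K_{\Omega}(0,w)\bigr)\big|_{w=0}$ the identity on each domain, and the transformation law then yields $T_{\Omega_1}(0,0)z=\overline{J_F(0)}^{\intercal}T_{\Omega_2}(0,0)F(z)$, i.e.\ $F(z)=J_F(0)z$. Your proof is correct, and it uses only the non-vanishing of $K_{\Omega}(z,0)$ from (i), not its constancy.
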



Many important sufficient conditions for the 
above conditions (i) and (ii) 
have been established
(\cite{IsK10, Yam14, KNY14, Yam15, Yam17}).
In particular, 
the following lemma will be useful for our analysis.

\begin{lemma}[\cite{Yam17}]
If $\Omega$ is a Reinhardt domain 
containing the origin and 
$A^2(\Omega)$ contains $1, z_1, \ldots, z_n$, 
then $\Omega$ satisfies the conditions (i) and (ii) 
in Theorem~4.1. 
\end{lemma}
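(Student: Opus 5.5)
The plan is to exploit the Reinhardt symmetry, which makes distinct monomials orthogonal in $A^2(\Omega)$. Let $\{\phi_\alpha\}$ be an orthonormal basis of $A^2(\Omega)$. Since $\Omega$ is Reinhardt and contains the origin, it is invariant under the torus action $z\mapsto (e^{i\theta_1}z_1,\ldots,e^{i\theta_n}z_n)$. Hence every $f\in A^2(\Omega)$ has a Laurent expansion whose terms are mutually orthogonal; because $0\in\Omega$, only terms with nonnegative exponents occur. So $A^2(\Omega)$ has an orthonormal basis consisting of normalized monomials $z^p/\|z^p\|$, with $p$ ranging over the set $P$ of multi-indices with $\|z^p\|<\infty$. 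The hypothesis says that $0,e_1,\ldots,e_n\in P$. This gives
\begin{equation*}
K_\Omega(z,w)=\sum_{p\in P}\frac{z^p\overline{w}^p}{\|z^p\|^2}.
\end{equation*}

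\textbf{Condition (i).} Setting $w=0$ kills every term except $p=0$. Therefore $K_\Omega(z,0)=\|1\|^{-2}=\mathrm{vol}(\Omega)^{-1}$, which is a positive constant. Here $\mathrm{vol}(\Omega)<\infty$ because $1\in A^2(\Omega)$.

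\textbf{Condition (ii).} I would compute
\begin{equation*}
\frac{\partial^2}{\partial z_j\partial\overline{w_k}}\log K
=\frac{K_{z_j\overline{w}_k}}{K}-\frac{K_{z_j}K_{\overline{w}_k}}{K^2}
\end{equation*}
at $w=0$, using that $K(z,0)$ is a nonzero constant.
\begin{itemize}
\item $K_{z_j}(z,0)=\partial_{z_j}K(z,0)=0$, since $K(z,0)$ is constant in $z$, so the second term vanishes.
\item $\partial_{\overline{w}_k}K(z,w)\big|_{w=0}$ picks out exactly the terms that are linear in $\overline{w}$, namely those with $p=e_k$. It equals $z_k/\|z_k\|^2$.
\item Then $\partial_{z_j}$ of this gives $\delta_{jk}\|z_k\|^{-2}$.
\end{itemize}
Hence
\begin{equation*}
T_\Omega(z,0)=\mathrm{vol}(\Omega)\,\mathrm{diag}\bigl(\|z_1\|^{-2},\ldots,\|z_n\|^{-2}\bigr),
\end{equation*}
which is independent of $z$ and positive definite because each $\|z_k\|$ is finite and nonzero.

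\textbf{Main obstacle.} The only delicate step is justifying the monomial orthogonal decomposition and the termwise differentiation of the kernel series. I would handle this by the standard Laurent-series argument for Reinhardt domains: compute coefficients by integrating over tori, and use local uniform convergence of $\sum\phi_\alpha(z)\overline{\phi_\alpha(w)}$ on compacta. Differentiation in $\overline{w}$ at $w=0$ is then legitimate. The derivative conditions also only need to hold near the origin, and they extend to all of $\Omega$ by the identity theorem, since both sides are holomorphic in $z$.
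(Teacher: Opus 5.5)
Your proof is correct. The paper does not prove this lemma (it only cites Yamamori), and your route is the standard proof of such a statement: torus invariance makes $\{z^p/\|z^p\|\}_{p\in P}$ an orthonormal basis, so $K_\Omega(z,w)=\sum_{p\in P}z^p\overline{w}^p/\|z^p\|^2$, from which $K_\Omega(z,0)=\mathrm{vol}(\Omega)^{-1}$ and $T_\Omega(z,0)=\mathrm{vol}(\Omega)\,\mathrm{diag}(\|z_1\|^{-2},\ldots,\|z_n\|^{-2})$ are read off. This is the same kernel expansion the paper itself relies on in (\ref{eqn:3.1}) and (\ref{eqn:4.3}), and your closing appeal to the identity theorem is unnecessary, since the kernel series converges locally uniformly on all of $\Omega\times\Omega$ and your termwise computation is already valid at every point $(z,0)$ with $z\in\Omega$.
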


Using Theorem~4.1 with Lemma~4.2, 
we investigate the structure of 
the holomorphic automorphism group 
of the domain $D(a)$ in (\ref{eqn:1.1}). 
Since $A^2(D(a))$ contains a nontrivial
constant function by Theorem~3.1, 
it follows that  $K_{D(a)}(z)> 0$ for $z\in D(a)$, 
which ensures that the function 
$B_{D(a)}(z)$ is well-defined on $D(a)$.

\begin{theorem}
If $\frac{2}{n-1}<a \leq \frac{3}{n-1}$, 
then every $\varphi\in{\rm Aut}(D(a))$ is linear. 
\end{theorem}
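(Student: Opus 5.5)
The strategy is to show first that every automorphism fixes the origin, and then to apply Theorem~4.1 with $\Omega_1=\Omega_2=D(a)$. Lemma~4.2 supplies the hypotheses of Theorem~4.1. Indeed, $D(a)$ is a Reinhardt domain containing $0$, and by Theorem~3.1(i) its Bergman space is $A^2(D(a))={\rm Span}\{1,z_1,\ldots,z_n\}$, which contains $1,z_1,\ldots,z_n$. Hence conditions (i) and (ii) hold, and any $\varphi\in{\rm Aut}(D(a))$ with $\varphi(0)=0$ is linear. The work therefore reduces to proving $\varphi(0)=0$.

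To locate $\varphi(0)$ I use the invariant $B_{D(a)}$ from (\ref{eqn:4.1}) together with its invariance (\ref{eqn:4.2}). By (\ref{eqn:3.1}), $K(z,w)=c_0+\sum_k c_k z_k\overline{w_k}$. Set $\phi(z)=\sum_k \tilde c_k|z_k|^2$. In the scaled coordinates $w_k=\sqrt{\tilde c_k}z_k$, the matrix $T(z,z)$ is the Fubini--Study matrix, so
\[
\det T(z,z)=\Big(\prod_{k=1}^n\tilde c_k\Big)\,(1+\phi(z))^{-(n+1)}.
\]
This is the standard determinant $\det\big(\partial\bar\partial\log(1+|w|^2)\big)=(1+|w|^2)^{-n-1}$. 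It follows that
\[
B_{D(a)}(z)=\frac{\prod_k\tilde c_k}{c_0}\,(1+\phi(z))^{-(n+2)}.
\]
This is a strictly decreasing function of $\phi(z)$, and $\phi(z)\ge 0$ with equality only at $z=0$. Thus $B_{D(a)}$ attains its unique maximum at the origin.

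By (\ref{eqn:4.2}) we have $B_{D(a)}(\varphi(0))=B_{D(a)}(0)$, which is the maximum value. Uniqueness of the maximizer then gives $\varphi(0)=0$, and Theorem~4.1 applied via Lemma~4.2 yields that $\varphi$ is linear.

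The main obstacle is the careful verification of the determinant formula and of the invariance law on all of $D(a)$. The invariance requires $K(z)>0$ on $D(a)$, which holds because $c_0>0$, and it requires $T(z,z)$ to be defined there. I would handle this by the explicit linear rescaling to the Fubini--Study potential. A linear change of variables multiplies $\det T$ by a constant, so the computation transfers directly to $D(a)$. I would also double-check that Lemma~4.2 needs no further hypotheses, such as completeness or boundedness, beyond those stated; $D(a)$ is a complete Reinhardt domain, so this should be fine.
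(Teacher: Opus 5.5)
Your proof is correct and follows the paper's argument. You reduce via Lemma~4.2 and Theorem~4.1 to showing $\varphi(0)=0$, then use the invariant $B_{D(a)}=\prod_{j=0}^n c_j/K_{D(a)}^{n+2}$ (which agrees with your formula) and the fact that it is uniquely maximized at the origin; your direct step $B_{D(a)}(\varphi(0))=B_{D(a)}(0)$ is a cleaner rendering of the paper's contradiction argument via $w_0=F_0^{-1}(0)$.
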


\begin{proof}
Since $A^2(D(a))$ contains $\{1,z_1,\ldots,z_n\}$ 
from Theorem~2.1, Lemma~4.2 implies that 
the domain $D(a)$ satisfies
the conditions (i) and (ii) in Theorem~4.1. 
We now apply Theorem~4.1 to the case
$\Omega_1=\Omega_2=D(a)$.
It remains to prove
that every $F\in{\rm Aut}(D(a))$
satisfies $F(0)=0$. 

We prove this by contradiction. 
Let us assume that there exist 
$F_0\in {\rm Aut}(D(a))$ and 
$z_0 \in D(a) \setminus\{0\}$ such that 
$F_0 (0)=z_0$. 
From the explicit formula for the Bergman kernel  
in (\ref{eqn:3.1}), 
we can compute the exact expression of 
$B_{D(a)}$ as follows:
\begin{equation}\label{eqn:4.3}
B_{D(a)}(z)=
\frac{\prod_{j=0}^n c_j}
{K_{D(a)}(z)^{n+2}}=
\frac{\prod_{j=0}^n c_j}
{(c_0+\sum_{j=1}^n c_j |z_j|^2)^{n+2}}. 
\end{equation}
In particular, 
\begin{equation}\label{eqn:4.4}
B_{D(a)}(z_0)<B_{D(a)}(0)
\left(=\frac{\prod_{j=1}^n c_j}{c_0^{n+1}}\right).
\end{equation}
Let $w_0=F_0^{-1}(0)\;(\neq 0)$.
Then (\ref{eqn:4.2}) is equivalent to 
$B_{D(a)}(F_0(0))<B_{D(a)}(F_0(w_0))$.
From (\ref{eqn:4.2}), we have
$B_{D(a)}(0)<B_{D(a)}(w_0)$,
which is a contradiction. 
\end{proof}

\begin{theorem}
If $\frac{2}{n-1}<a_1, a_2 \leq \frac{3}{n-1}$, 
then every biholomorphism 
from $D(a_1)$ to $D(a_2)$ is linear. 
In particular, if $D(a_1)$ and $D(a_2)$ are
biholomorphically equivalent, then they
are linearly equivalent.
\end{theorem}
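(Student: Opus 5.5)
Let $F:D(a_1)\to D(a_2)$ be a biholomorphism. The plan mirrors the proof of Theorem~4.3: first show that $F(0)=0$, and then invoke Theorem~4.1 with $\Omega_1=D(a_1)$ and $\Omega_2=D(a_2)$. By Theorem~3.1(i) both Bergman spaces equal ${\rm Span}\{1,z_1,\ldots,z_n\}$. Lemma~4.2 therefore applies to each domain, since both are Reinhardt domains containing the origin, and each satisfies conditions (i) and (ii) of Theorem~4.1. So once $F(0)=0$ is known, $F$ is linear. The second assertion is then immediate.

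To get $F(0)=0$, I would use the invariant $B$ from (\ref{eqn:4.1}) together with the explicit formula (\ref{eqn:4.3}). For $\ell=1,2$, write the kernel of $D(a_\ell)$ as $c_0^{(\ell)}+\sum_j c_j^{(\ell)}|z_j|^2$ with all $c_j^{(\ell)}>0$. Then
\[
B_{D(a_\ell)}(z)=\frac{\prod_{j=0}^n c_j^{(\ell)}}{\bigl(c_0^{(\ell)}+\sum_{j=1}^n c_j^{(\ell)}|z_j|^2\bigr)^{n+2}} .
\]
This function attains its strict maximum on $D(a_\ell)$ exactly at $z=0$, and this is the point the argument will turn on. By (\ref{eqn:4.2}) we have $B_{D(a_1)}(z)=B_{D(a_2)}(F(z))$ for all $z\in D(a_1)$. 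Since $F$ is surjective, the two functions have the same supremum, and hence
\[
\max B_{D(a_1)}=B_{D(a_1)}(0)=\max B_{D(a_2)}=B_{D(a_2)}(0).
\]
Now $B_{D(a_2)}(F(0))=B_{D(a_1)}(0)=B_{D(a_2)}(0)$. Because $0$ is the unique maximizer of $B_{D(a_2)}$, it follows that $F(0)=0$.

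The main obstacle is that, unlike in Theorem~4.3, the two domains are different, so we cannot compare $B$ at $0$ and at $z_0$ on a single domain. The point of the step above is that the maximum values are matched through the equality of suprema under the bijection $F$. The matching does not require comparing the constants $c_j^{(1)}$ and $c_j^{(2)}$. One should also check that $B$ is well defined everywhere, which holds because the kernels are positive since the constant function lies in each Bergman space. The formula (\ref{eqn:4.3}) itself needs $\det T(z,z)=\prod_{j\ge1}c_j/K^{n+1}$, which is a routine computation for $\log$ of $c_0+\sum c_j|z_j|^2$: it is the Fubini--Study determinant after rescaling.
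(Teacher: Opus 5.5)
Your proof is correct and follows the paper's argument. The paper says only that $F(0)=0$ follows as in Theorem~4.3, because the invariant $B$ is uniquely maximized at the origin, and then applies Theorem~4.1 with Lemma~4.2; your equality-of-suprema step is a valid way to make the cross-domain comparison explicit (the direct chain $B_{D(a_1)}(0)=B_{D(a_2)}(F(0))<B_{D(a_2)}(0)=B_{D(a_1)}(F^{-1}(0))<B_{D(a_1)}(0)$ for $F(0)\neq 0$ also works), and one harmless slip: $\det T(z,z)=\prod_{j=0}^{n}c_j/K^{n+1}$, since $1-\sum_{j\ge 1}c_j|z_j|^2/K=c_0/K$ supplies the factor $c_0$ your displayed $B$ already contains, though only the fact that $B$ is a positive constant times $K^{-(n+2)}$ is used.
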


\begin{proof}
In a manner similar to the proof of Theorem~4.3, 
it is straightforward to see that  
every biholomorphism $F: D(a_1)\to D(a_2)$ 
satisfies $F(0)=0$. 
Therefore, from Theorem~4.1, 
every biholomorphism 
$F: D(a_1) \to D(a_2)$ is linear. 
\end{proof}

\begin{remark}
By Theorem~4.2 in \cite{Yam25}, 
the statement of Theorem 4.1 in the two-dimensional 
case can be strengthened as follows:
$D(a_1)$ and $D(a_2)$ are biholomorphically 
equivalent if and only if $a_1=a_2$.
Since
the result of Soldatkin \cite{Sol02},  
which is valid only in the two-dimensional case, 
plays a crucial role in the proof of 
\cite{Yam25}, 
it seems to be difficult to obtain
an analogous result 
in the general $n$-dimensional case. 
\end{remark}

\smallskip


{\it Acknowledgement.} \;
After completing a preliminary version of this note, 
we learned from Atsushi Yamamori that many results in Section~4 
had already appeared in his preprint~\cite{Yam25}.
We are grateful to him for bringing this to our attention.

\end{document}